\theoremstyle{plain}
\newtheorem{thm}{Theorem}[section]
\newtheorem{lem}[thm]{Lemma}
\newtheorem{pro}[thm]{Proposition}
\theoremstyle{definition}
\newtheorem{defn}[thm]{Definition}
\theoremstyle{remark}
\newtheorem{rem}[thm]{Remark}
\newcommand{\Gtwo}{\ifmmode{{\rm G}_2}\else{${\rm G}_2$}\fi}
\date{\today}
\begin{document}

\title[]
 {A Classification of 3-dimensional paracontact
metric manifolds with $Q\varphi=\varphi Q$}

\author[S. Zamkovoy]{Simeon Zamkovoy}

\address{
University of Sofia "St. Kl. Ohridski"\\
Faculty of Mathematics and Informatics\\
Blvd. James Bourchier 5\\
1164 Sofia, Bulgaria}
\email{zamkovoy@fmi.uni-sofia.bg}

\author[A. Bojilov]{ Assen Bojilov}

\address{
University of Sofia "St. Kl. Ohridski"\\
Faculty of Mathematics and Informatics\\
Blvd. James Bourchier 5\\
1164 Sofia, Bulgaria}
\email{bojilov@fmi.uni-sofia.bg}

\subjclass{}

\keywords{3-dimensional paracontact metric manifolds}


\begin{abstract}
We show that a $3-$dimensional paracontact manifold on which $Q\varphi =\varphi Q$
is either a manifold with $trh^2=0$, flat or of constant $\xi-$sectional curvature $k\neq-1$ and constant $\varphi$-sectional
curvature $-k\neq 1$.
\end{abstract}

\newcommand{\g}{\mathfrak{g}}
\newcommand{\s}{\mathfrak{S}}
\newcommand{\D}{\mathcal{D}}
\newcommand{\F}{\mathcal{F}}
\newcommand{\R}{\mathbb{R}}
\newcommand{\K}{\mathbb{K}}
\newcommand{\U}{\mathbb{U}}
\newcommand{\diag}{\mathrm{diag}}
\newcommand{\End}{\mathrm{End}}
\newcommand{\im}{\mathrm{Im}}
\newcommand{\id}{\mathrm{id}}
\newcommand{\Hom}{\mathrm{Hom}}

\newcommand{\Rad}{\mathrm{Rad}}
\newcommand{\rank}{\mathrm{rank}}
\newcommand{\const}{\mathrm{const}}
\newcommand{\tr}{{\rm tr}}
\newcommand{\ltr}{\mathrm{ltr}}
\newcommand{\codim}{\mathrm{codim}}
\newcommand{\Ker}{\mathrm{Ker}}

\newcommand{\thmref}[1]{Theorem~\ref{#1}}
\newcommand{\propref}[1]{Proposition~\ref{#1}}
\newcommand{\corref}[1]{Corollary~\ref{#1}}
\newcommand{\secref}[1]{\S\ref{#1}}
\newcommand{\lemref}[1]{Lemma~\ref{#1}}
\newcommand{\dfnref}[1]{Definition~\ref{#1}}


\newcommand{\ee}{\end{equation}}
\newcommand{\be}[1]{\begin{equation}\label{#1}}

\maketitle

\section{Introduction}\label{sec-1}
The assumption that $(M^{2n+1},\varphi,\xi,\eta,g)$ is a paracontact metric manifolds is very weak,
since the set of metrics associated to the paracontact form $\eta$ is huge. Even if the structure is
$\eta-$Einstein we do not have a complete classification. Also for $n=1$, we known very little about
the geometry of these manifolds (see \cite{Z1}). On the other hand if the structure is para-Sasakian, the Ricci operator
$Q$ commutes with $\varphi$ (see \cite{Z1}), but in general $Q\varphi \neq \varphi Q$ and the problem of the characterization
of paracontact metric manifolds with $Q\varphi =\varphi Q$ is open. In \cite{T} Tanno defined a special family of paracontact
metric manifolds by the requirement that $\xi$ belong to the $k-$nullity distribution of $g$. We also know very little
about these manifolds (see \cite{Z1}). In this paper, we show that a $3-$dimensional paracontact manifold on which $Q\varphi =\varphi Q$
is either a manifold with $trh^2=0$, flat or of constant $\xi-$sectional curvature $k\neq-1$ and constant $\varphi$-sectional
curvature $-k\neq 1$.

\section{Preliminaries}\label{sec-2}
A $C^{\infty}$ manifold $M^{(2n+1)}$
is said to be \emph{paracontact manifold}, if it carries a global $1-$form
$\eta$ such that $\eta\wedge(d\eta)^{n}\neq 0$ everywhere. We assume throughout that all manifolds are connected. Given a paracontact form $\eta$,
it is well known that there exists a unique vector field $\xi$, called \emph{characteristic vector field} of $\eta$, satisfying $\eta(\xi)=1$ and
$d\eta(\xi,X)=0$ for all vector fields $X$. A pseudo-Riemannian metric $g$ is said to be an \emph{associated metric} if there exists a tensor field
$\varphi$ of type $(1,1)$ such that
\begin{equation}\label{con}
d\eta(X,Y)=g(X,\varphi Y), \quad \eta(X)=g(X,\xi), \quad \varphi^2=Id-\eta\otimes \xi.
\end{equation}
From these conditions one easily obtains
\begin{equation}\label{b1}
\varphi\xi=0, \quad \eta\circ\varphi=0,\quad g(\varphi X,\varphi Y)=-g(X,Y)+\eta (X)\eta (Y).
\end{equation}
The structure $(\varphi,\xi,\eta,g)$ is called a \emph{paracontact metric structure}, and a manifold $M^{2n+1}$ with paracontact metric structure
$(\varphi,\xi,\eta,g)$ is said to be a \emph{paracontact metric manifold}.

Denoting by $\pounds$ and $R$ the Lie differentiation and the curvature tensor respectively, we define the operators $\tau$, $l$ and $h$ by
\begin{equation}\label{1}
\tau=\pounds_{\xi}g, \quad lX=R(X,\xi)\xi, \quad h=\frac{1}{2}\pounds_{\xi}\varphi.
\end{equation}
The $(1,1)-$type tensors $h$ and $l$ are symmetric and satisfy
\begin{equation}\label{2}
l\xi=0,\quad h\xi=0,\quad trh=0,\quad trh\varphi=0 \quad and \quad h\varphi=-\varphi h.
\end{equation}
We also have the following formulas for a paracontact manifold:
\begin{equation}\label{3}
\nabla_{X}\xi=-\varphi X+\varphi hX \ (and \ hence \ \nabla_{\xi}\xi=0)
\end{equation}
\begin{equation}\label{4}
\nabla_{\xi}\varphi=0
\end{equation}
\begin{equation}\label{5}
trl=g(Q\xi,\xi)=-2n+trh^2
\end{equation}
\begin{equation}\label{6}
\varphi l\varphi+l=-2(\varphi^2-h^2)
\end{equation}
\begin{equation}\label{7}
\nabla_{\xi}h=-\varphi-\varphi l+\varphi h^2,
\end{equation}
where $tr$ is the trace of the operator, $Q$ is the Ricci operator and $\nabla$ is the Levi-Civita connection of $g$. The formulas are proved in \cite{Z}.

A paracontact metric manifold for which $\xi$ is Killing is called a $K-\emph{paracontact}$ $\emph{manifold}$. A paracontact structure on $M^{(2n+1)}$ naturally gives rise to an almost paracomplex structure on the product $M^{(2n+1)}\times\Re$. If this almost paracomplex structure is integrable, the given paracontact metric manifold is said to be a $\emph{para-Sasakian}$. Equivalently, (see \cite{Z}) a paracontact metric manifold is a para-Sasakian if and only if
\begin{equation}\label{8}
(\nabla_{X}\varphi)Y=-g(X,Y)\xi+\eta(Y)X,
\end{equation}
for all vector fields $X$ and $Y$.

It is easy to see that a $3-$dimentional paracontact manifold is para-Sasakian if and only if $h=0$. For details we refer the reader to \cite{JW},\cite{Z}.

A paracontact metric structure is said to be  $\eta-\emph{Einstein}$ if
\begin{equation}\label{8.1}
Q=a.id+b.\eta\otimes\xi,
\end{equation}
where $a,b$ are smooth functions on $M^{(2n+1)}$. We also recall that the $k-$nullity distribution $N(k)$ of a pseudo-Riemannian manifold $(M,g)$, for a real number $k$, is the distribution
\begin{equation}\label{T}
N_p(k)=\{Z\in T_pM:R(X,Y)Z=k(g((Y,Z)X-g(X,Z)Y)\},
\end{equation}
for any $X,Y\in T_pM$ (see \cite{T}).

Finally, the sectional curvature $K(\xi,X)=\epsilon_XR(X,\xi,\xi,X)$, where $|X|=\epsilon_X=\pm 1$, of a plane section spanned by $\xi$ and the vector $X$ orthogonal to $\xi$ is called $\emph{$\xi$-sectional curvature}$, whereas the sectional curvature $K(X,\varphi X)=-R(X,\varphi X,\varphi X,X)$, where $|X|=-|\varphi X|=\pm 1$,
of a plane section spanned by vectors $X$ and $\varphi X$ orthogonal to $\xi$ is called a $\emph{$\varphi$-sectional curvature}$.

\section{Main result}\label{sec-4}
Before we state our main result we need the following lemma which was proved in \cite{Z1}, but we include its proof here for completeness and because we will use many of formulas which appear in the proof.
\begin{lem}\label{l1}
Let $M^3$ be a paracontact metric manifold with a paracontact metric structure $(\varphi,\xi,\eta,g)$ such that $\varphi Q=Q\varphi$. Then the function $trl$ is constant everywhere on $M^3$.
\end{lem}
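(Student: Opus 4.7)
\medskip\noindent\textbf{Proof proposal.}
The plan is to use $Q\varphi=\varphi Q$ algebraically to bring $Q$ into an $\eta$-Einstein form, and then to read off $\alpha:=\tr l=\const$ from the contracted second Bianchi identity applied to that form.

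First, applying the commutation to $\xi$ together with $\varphi\xi=0$ gives $\varphi Q\xi=0$, so $Q\xi$ is a multiple of $\xi$; and by~(\ref{5}) in fact $Q\xi=\alpha\xi$ with $\alpha=\tr l$. In dimension three the Ricci operator determines the Riemann tensor, so writing $lX=R(X,\xi)\xi$ via the standard 3D formula and using $Q\xi=\alpha\xi$, I can solve for $Q$ in terms of $l$, $\alpha$, and the scalar curvature $r$. Substituting the resulting expression into $Q\varphi=\varphi Q$ reduces it to $l\varphi=\varphi l$; combined with~(\ref{6}) and $l\xi=0$ this forces
\[ l=h^{2}-\varphi^{2}. \]
Plugging this into~(\ref{7}) and using $\varphi^{3}=\varphi$ (immediate from~(\ref{con}) and~(\ref{b1})) gives $\nabla_{\xi}h=0$, whence $\xi\alpha=2\tr(h\nabla_{\xi}h)=0$.

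Next I exploit the fact that $h$ is $g$-symmetric, annihilates $\xi$, and anticommutes with $\varphi$: a short frame computation on $\ker\eta$ (choose $X\in\ker\eta$ and write $hX=\lambda X+\mu\varphi X$) shows $h^{2}=c\,\varphi^{2}$ for a scalar function $c=(\alpha+2)/2$. Inserting this back into the formula for $Q$ collapses all terms into the $\eta$-Einstein form
\[ Q=\tfrac{r-\alpha}{2}\,\id+\tfrac{3\alpha-r}{2}\,\eta\otimes\xi. \]
Now the contracted second Bianchi identity $\operatorname{div} Q=\tfrac{1}{2}\,dr$, together with the auxiliary formulas $\operatorname{div}(f\,\id)=df$ and $\operatorname{div}(f\,\eta\otimes\xi)=(\xi f)\eta$ (both of which follow from~(\ref{3}) and $\operatorname{div}\xi=-\tr\varphi+\tr(\varphi h)=0$), applied to the above form of $Q$, produces
\[ d\alpha=(3\xi\alpha-\xi r)\,\eta. \]
Hence $d\alpha$ is a multiple of $\eta$; evaluating on $\xi$ and combining with $\xi\alpha=0$ from the previous paragraph yields $d\alpha\equiv0$, so $\alpha=\tr l$ is constant on the connected manifold $M^{3}$.

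The genuinely delicate point is the final Bianchi step: it couples the algebraic identity $h^{2}=c\varphi^{2}$ with the differential identity $\operatorname{div} Q=\tfrac{1}{2}dr$, and the clean cancellation of all $dr$-terms, leaving only an $\eta$-multiple, depends on the exact coefficients produced by the $\eta$-Einstein decomposition; the remaining steps are essentially substitutions into the 3D curvature identity and into~(\ref{7}).
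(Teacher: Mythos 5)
Your argument is correct, and its overall architecture coincides with the paper's: establish $Q\xi=(trl)\xi$, deduce $l\varphi=\varphi l$ from the three-dimensional curvature formula, combine this with \eqref{6} and \eqref{7} to get $l=h^2-\varphi^2$ and $\nabla_\xi h=0$ (hence $\xi(trl)=0$), reduce $Q$ to the $\eta$-Einstein form \eqref{15}, and finish with a Bianchi identity. You differ from the paper in two places, both defensibly and both to your advantage. First, to pass from $l=h^2-\varphi^2$ to $l=\frac{trl}{2}\varphi^2$ you invoke Cayley--Hamilton for the trace-free symmetric operator $h$ on the two-dimensional distribution $\ker\eta$, which gives $h^2=\frac{trh^2}{2}\varphi^2$ unconditionally; the paper instead argues that $lX$ is parallel to $X$ for $X\perp\xi$ and must treat the locus where $l$ vanishes separately, so your route removes a case distinction. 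Second, and more substantially, your endgame uses only the contracted second Bianchi identity $\Div Q=\tfrac12\,d(scal)$ applied tensorially to $Q=a\,\id+b\,\eta\otimes\xi$, which (with $\Div(f\,\id)=df$, $\Div(f\,\eta\otimes\xi)=(\xi f)\eta$, and $\Div\xi=0$) yields $d(trl)=\bigl(3\xi(trl)-\xi(scal)\bigr)\eta$ and hence $d(trl)=0$ after evaluating on $\xi$ and using $\xi(trl)=0$. The paper uses the frame form of the same contracted identity only to extract the single component $\xi(scal)=0$, and then needs $\nabla_\xi Q=0$, $\nabla_\xi R=0$, the full uncontracted second Bianchi identity \eqref{18}, and a computation of $(\nabla_XR)(Y,\xi,\xi)$ to kill the horizontal derivatives of $trl$. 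Your version is shorter and never touches the full curvature tensor; the paper's longer detour does, however, produce the identities \eqref{18}--\eqref{20}, which are reused in the proof of the main theorem, so its extra work is not wasted in context.
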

\begin{proof}
We recall that the curvature tensor of a 3-dimensional pseudo-Riemannian manifold is given by
\begin{equation}\label{9}
R(X,Y)Z=g(Y,Z)QX-g(X,Z)QY+g(QY,Z)X-g(QX,Z)Y-
\end{equation}
$-\frac{scal}{2}(g(Y,Z)X-g(X,Z)Y),$

where $scal$ is the scalar curvature of the manifold.

Using $\varphi Q=Q\varphi$, \eqref{5} and $\varphi\xi=0$ we have that
\begin{equation}\label{10}
Q\xi=(trl)\xi.
\end{equation}
From \eqref{9} and using \eqref{1} and \eqref{10}, we have that for any $X$,
\begin{equation}\label{11}
lX=QX+(trl-\frac{scal}{2})X+\eta(X)(\frac{scal}{2}-2trl)\xi
\end{equation}
and hence $\varphi Q=Q\varphi$ and $\varphi\xi=0$ give
\begin{equation}\label{12}
\varphi l=l\varphi.
\end{equation}
By virtue of \eqref{12}, \eqref{6} and \eqref{7}, we obtain
\begin{equation}\label{13}
-l=\varphi^2-h^2
\end{equation}
and $\nabla_{\xi}h=0$. Differentiating \eqref{13} along $\xi$ and using \eqref{4} and $\nabla_{\xi}h=0$ we find that $\nabla_{\xi}l=0$ and therefore $\xi trl=0$. If at point $p\in M^3$ there exists
$X\in T_pM$, $X\neq\xi$ such that $lX=0$, then $l=0$ at the point $P$. In fact if $Y$ is the projection of $X$ on $\mathbb {D}$, we have $lY=0$, since $l\xi=0$. Using \eqref{12} we have $l\varphi Y=0$.
So $l=0$ at the point $P$ (and thus $trl=0$ at the point $P$). We now suppose that $l\neq 0$ on a neighborhood $U$ of the point $P$. Using \eqref{12} and that $\varphi$ is antisymmetric, we get $g(\varphi X,lX)=0$.
So $lX$ is parallel to $X$ for any $X$ orthogonal to $\xi$. It is not hard to see that $lX=\frac{trl}{2}X$ for any $X$ orthogonal to $\xi$. Thus for any $X$, we have
\begin{equation}\label{14}
lX=\frac{trl}{2}\varphi^2X
\end{equation}
Substituting \eqref{14} in \eqref{11} we get
\begin{equation}\label{15}
QX=aX+b\eta(X)\xi,
\end{equation}
where $a=\frac{scal-trl}{2}$ and $b=\frac{3trl-scal}{2}$. Differentiating \eqref{15} with respect to $Y$ and using \eqref{15} and $\nabla_{\xi}\xi=0$ we find
\begin{equation}\label{16}
(\nabla_{Y}Q)X=(Ya)X+((Yb)\eta(X)+bg(X,\nabla_Y\xi))\xi+b\eta(X)\nabla_Y\xi.
\end{equation}
So using $\xi trl=0$ and $\nabla_{\xi}\xi=0$, from  \eqref{16} with $X=Y=\xi$, we have $(\nabla_{\xi}Q)\xi=0$. Also using $h\varphi=-\varphi h$, and \eqref{3}, from \eqref{16} with $X=Y$ orthogonal to $\xi$, we get
\begin{equation}\label{17}
g((\nabla_X Q)X-(\nabla_{\varphi X}Q)\varphi X,\xi)=0.
\end{equation}
But it is well known that $$(\nabla_XQ)X-(\nabla_{\varphi X}Q)\varphi X+(\nabla_{\xi}Q)\xi=\frac{1}{2}grad(scal),$$
for any unit vector $X$ orthogonal to $\xi$. Hence, we easily get from the last two equations that $\xi(scal)=0$, and thus $\nabla_{\xi}Q=0$. Therefore, differentiating \eqref{9} with respect to $\xi$ and using $\nabla_{\xi}Q=0$,
we have $\nabla_{\xi}R=0$. So from the second identity of Bianchi, we get
\begin{equation}\label{18}
(\nabla_X R)(Y,\xi,Z)=(\nabla_{Y}R)(X,\xi,Z).
\end{equation}
Now, substituting \eqref{15} in \eqref{9}, we obtain
\begin{equation}\label{19}
R(X,Y)Z=(\gamma g(Y,Z)+b\eta(Y)\eta(Z))X-(\gamma g(X,Z)+b\eta(X)\eta(Z))Y+
\end{equation}
$$+b(\eta(X)g(Y,Z)-\eta(Y)g(X,Z))\xi,$$
where $\gamma=\frac{scal}{2}-trl$. For $Z=\xi$, \eqref{19} gives
\begin{equation}\label{20}
R(X,Y)\xi=\frac{trl}{2}(\eta(Y)X-\eta(X)Y).
\end{equation}
Using \eqref{20}, we obtain $(\nabla_X R)(Y,\xi,\xi)=\frac{X(trl)}{2}Y$, for $X,Y$ orthogonal to $\xi$. From this and \eqref{18} for $Z=\xi$, we get $(Xtrl)Y=(Ytrl)X$. Therefore $Xtrl=0$ for $X$ orthogonal to $\xi$, but $\xi(trl)=0$,
so the function $trl$ is constant and this completes the proof of the Lemma.
\end{proof}

\begin{rem}\label{r1}
When $l=0$ everywhere, then using \eqref{9}, \eqref{10} and \eqref{11} we get $R(X,Y)\xi=0$. This together with Theorem 3.3 in \cite{ZT} gives that $M^3$ is flat.
\end{rem}
We have the following
\begin{pro} \cite{Z1} \label{p2}
Let $M^3$ be a paracontact metric manifold with paracontact metric structure $(\varphi,\xi,\eta,g)$. Then the following conditions are equivalent:

i) $M^3$ is a $\eta-$Einstein

ii) $Q\varphi=\varphi Q$

iii) $\xi$ belongs to the $k-$nullity distribution
\end{pro}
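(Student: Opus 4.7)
The plan is to establish the cycle (i)$\Rightarrow$(ii)$\Rightarrow$(iii)$\Rightarrow$(i). The implication (i)$\Rightarrow$(ii) is immediate from \eqref{8.1}: if $Q = a\,\id + b\,\eta\otimes\xi$, then $Q\varphi X = a\varphi X$ (because $\eta\circ\varphi = 0$) and $\varphi QX = a\varphi X$ (because $\varphi\xi = 0$), so the two agree.

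For (ii)$\Rightarrow$(iii), essentially everything is already inside the proof of \lemref{l1}. If $l \equiv 0$ then Remark \ref{r1} tells us that $M^3$ is flat, so trivially $\xi\in N(0)$. Otherwise the pointwise dichotomy established in that proof (``$lX = 0$ for some nonzero horizontal $X$ implies $l = 0$ at the point''), combined with the global constancy of $\tr l$, forces $l$ to be nowhere zero; the identity $lX = \tfrac{\tr l}{2}\varphi^2 X$ proved there, substituted into the derivation leading to \eqref{20}, then yields $R(X,Y)\xi = k(\eta(Y)X - \eta(X)Y)$ on all of $M^3$ with the constant $k = \tr l / 2$, i.e.\ $\xi \in N(k)$.

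The only substantive calculation is (iii)$\Rightarrow$(i). Assume $R(X,Y)\xi = k(\eta(Y)X - \eta(X)Y)$. Setting $Y = \xi$ gives $lX = R(X,\xi)\xi = k\varphi^2 X$, whence $\tr l = 2k$, and a Ricci contraction yields $Q\xi = 2k\xi$. Substituting $Z = \xi$ into the three-dimensional curvature formula \eqref{9} and using the symmetry of $Q$ together with $Q\xi = 2k\xi$ to rewrite $g(QX,\xi) = 2k\eta(X)$ and $g(QY,\xi) = 2k\eta(Y)$, one obtains
\[
R(X,Y)\xi = \eta(Y)QX - \eta(X)QY + \left(2k - \tfrac{scal}{2}\right)\bigl(\eta(Y)X - \eta(X)Y\bigr).
\]
Equating this with $k(\eta(Y)X - \eta(X)Y)$ and specializing to $Y = \xi$ produces
\[
QX = \left(\tfrac{scal}{2} - k\right)X + \left(3k - \tfrac{scal}{2}\right)\eta(X)\xi,
\]
which is exactly the $\eta$-Einstein form \eqref{8.1}.

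The subtlest point is the global step in (ii)$\Rightarrow$(iii): one must verify that the constant $k$ does not jump between the zero locus of $l$ and its complement. This is ensured by constancy of $\tr l$ (\lemref{l1}), since $\tr l = 0$ would force $lX = \tfrac{\tr l}{2}\varphi^2 X \equiv 0$ on the open set $\{l\neq 0\}$, contradicting its definition; hence $\{l\neq 0\}$ is either empty (and $M^3$ is flat with $k=0$) or all of $M^3$ (with $k = \tr l/2 \neq 0$). All other steps are routine trace manipulations.
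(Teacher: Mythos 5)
Your proof is correct. The paper states this proposition without proof, deferring to \cite{Z1}, but your argument is exactly the one supported by the machinery already displayed here: \eqref{15} in the proof of \lemref{l1} gives (ii)$\Rightarrow$(i) directly, and \eqref{20} together with the constancy of $trl$ gives (ii)$\Rightarrow$(iii), so your cycle (i)$\Rightarrow$(ii)$\Rightarrow$(iii)$\Rightarrow$(i) with the routine trace computation for (iii)$\Rightarrow$(i) is an equivalent packaging of the same approach. Your explicit treatment of the global dichotomy (either $l\equiv 0$ and $M^3$ is flat, or $l$ is nowhere zero with $k=trl/2\neq 0$) correctly closes a point the paper leaves implicit.
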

Our main theorem is
\begin{thm}
Let $M^3$ be a paracontact metric manifold with paracontact metric structure $(\varphi,\xi,\eta,g)$ on which $Q\varphi=\varphi Q$. Then $M^3$ is either a manifold with $trh^2=0$, flat or of constant $\xi-$sectional curvature $k\neq-1$ and constant $\varphi$-sectional curvature $-k\neq 1$.
\end{thm}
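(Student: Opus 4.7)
The plan is to invoke \lemref{l1} and then proceed by a case analysis based on $l$ and on the scalar $c := (trl+2)/2$. From the proof of the lemma, either $l \equiv 0$ (in which case Remark~\ref{r1} directly yields that $M^3$ is flat) or $l$ is nowhere zero; in the latter case one has $lX = \tfrac{trl}{2}\varphi^2 X$, the Ricci operator is $\eta$-Einstein with $Q = a\,\mathrm{Id} + b\,\eta\otimes\xi$ and $a + b = trl$ constant, and the full Riemann tensor is given by \eqref{19}. Substituting $lX = \tfrac{trl}{2}\varphi^2 X$ into $l = h^2 - \varphi^2$ from \eqref{13} yields
\begin{equation*}
h^2 = c\,\varphi^2,\qquad c = \tfrac{trl+2}{2} = k+1,
\end{equation*}
where $k := trl/2$ is, by \eqref{20}, the constant $\xi$-sectional curvature. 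If $c = 0$ then $trh^2 = 2c = 0$ and we are in the first alternative; if $l = 0$ we are in the second. The remaining and only nontrivial case is therefore $l \ne 0$ together with $c \ne 0$, and here one must prove that the $\varphi$-sectional curvature is the constant $-k$.

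Directly from \eqref{19} one finds $K(X,\varphi X) = \gamma := scal/2 - trl = a - k$ for $X \perp \xi$, so the concrete task reduces to proving $a = 0$, equivalently $\gamma = -k$. My approach is a local frame analysis adapted to $h$. When $c > 0$ I would pseudo-orthonormally diagonalize $h$ on $\mathbb{D} := \ker\eta$, producing $E_1, E_2$ of opposite causal type with $hE_1 = \lambda E_1$, $hE_2 = -\lambda E_2$, $\varphi E_1 = E_2$, $\varphi E_2 = E_1$ and $\lambda = \sqrt{c}$; when $c < 0$ I would take instead $hE_1 = \lambda E_2$, $hE_2 = -\lambda E_1$ with $\lambda = \sqrt{-c}$. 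In either case the identity $\nabla_\xi h = 0$ from the proof of \lemref{l1} together with $\lambda \ne 0$ forces $\nabla_\xi E_1 = \nabla_\xi E_2 = 0$; combining $\nabla g = 0$ with \eqref{3} then pins down every $\xi$-component of the connection coefficients $\nabla_{E_i}E_j$ explicitly in terms of $\lambda$, leaving only two undetermined scalar functions.

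The heart of the argument is to expand $R(E_1, E_2) E_1$ and $R(E_1, E_2) E_2$ in this frame via second covariant derivatives and to match the result against \eqref{19}. The key observation is that, when all three arguments are in $\mathbb{D}$, the formula \eqref{19} produces a vector lying entirely in $\mathbb{D}$; hence the $\xi$-components of the frame-side expressions must vanish, giving two relations linear in the two remaining unknown scalars with nonvanishing coefficient $\lambda$. Those scalars therefore vanish, and the surviving $\mathbb{D}$-components reduce in both cases to $\gamma = 1 - c = -k$, as required.

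The main obstacle is the bookkeeping in this final step: one must carefully expand the second covariant derivatives of the frame vectors and track the decomposition of the result into $E_1$, $E_2$ and $\xi$ components. What makes the argument work is exactly the vanishing of the $\xi$-component predicted by \eqref{19} together with $\lambda \ne 0$, and this is precisely why the sub-case $c = 0$ (that is, $trh^2 = 0$) must be split off as a separate alternative in the theorem.
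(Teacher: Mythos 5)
Your proposal is correct, and its case split ($l\equiv 0$ gives flatness via Remark~\ref{r1}; $c=0$ gives $trh^2=2c=0$; otherwise an adapted frame computation) reproduces exactly the trichotomy of the theorem; note only that you should say explicitly that the constancy of $trl$ from Lemma~\ref{l1} is what upgrades the pointwise statement ``$l=0$ at a point $\Rightarrow trl=0$'' to the global dichotomy ``$l\equiv 0$ or $l$ nowhere zero''. The skeleton of your argument is the paper's --- work in the frame $\{E_1,\varphi E_1,\xi\}$ adapted to $h$ when $k>-1$ and to $\varphi h$ when $k<-1$, and compare curvature computed from the connection against the $\eta$-Einstein form \eqref{19} --- but the two decisive steps are carried out by a genuinely different mechanism. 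The paper kills the tangential parts of $\nabla_{E_i}E_j$ using identity \eqref{32}, which it must first derive from the expression \eqref{31} for $\nabla\varphi$ on a $3$-dimensional paracontact manifold; you instead extract them from the vanishing of the $\xi$-components of $R(E_1,E_2)E_j$ dictated by \eqref{19}, and this does work: writing $\nabla_{E_1}E_1=a_1E_2+(\ldots)\xi$ and $\nabla_{E_2}E_2=a_2E_1+(\ldots)\xi$, the $\xi$-components of $R(E_1,E_2)E_2$ and $R(E_1,E_2)E_1$ come out as $\mp 2\lambda\epsilon_X a_i$, so $\lambda\neq 0$ forces $a_1=a_2=0$, and \eqref{31}--\eqref{32} are bypassed entirely. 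Likewise, the paper obtains $\nabla_\xi X$ as a multiple of $\varphi X$ from the $R(X,\varphi X)X$ computation and then needs the further computation of $R(\xi,X)\xi$ to pin down $scal=2k$; your observation that $\nabla_\xi h=0$ (respectively $\nabla_\xi(\varphi h)=0$, using \eqref{4}) already forces $\nabla_\xi E_i=0$ is valid, since metric compatibility leaves $\nabla_\xi E_1=qE_2$ and $(\nabla_\xi h)E_1=2\lambda qE_2$, so $q=0$; after that the single $\mathbb{D}$-component relation $\gamma=1-c=-k$ finishes both signs of $c$ at once. The trade-off is that the paper's longer route produces the explicit brackets $[\xi,X]$, $[X,\varphi X]$ of the adapted frame, which are of independent use for local models, whereas your route reaches $K(X,\xi)=k$ and $K(X,\varphi X)=-k$ with less machinery and a single curvature identity.
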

\begin{proof}
We can easily see from the proof of $Lemma~\ref{l1}$ and $Remark~\ref{r1}$ that if $trl=0$, $l=0$ in turns out that $M^3$ is flat. We can easily see from the proof of $Lemma~\ref{l1}$ that if $k=-1$, then $trl=-2$ and using \eqref{5}, we have that $M^3$ is a manifold with $trh^2=0$.

Let a $trl\neq 0$ then by $Proposition~\ref{p2}$ and \eqref{T} we have
\begin{equation}\label{30}
R(X,Y)\xi=k(\eta(Y)X-\eta(X)Y),
\end{equation}
where $k=\frac{trl}{2}\neq-1$ and $k\neq 0$. This implies that
\begin{equation}\label{31}
(\nabla_X\varphi Y)=-g(X-hX,Y)\xi+\eta(Y)(X-hX)
\end{equation}
as was pointed out by S. Zamkovoy (\cite{Z}); in fact this is true for any 3-dimensional paracontact manifold (\cite{JW}).
Computing $R(X,Y)\xi$ from \eqref{3} we have
$$R(X,Y)\xi=-(\nabla_X \varphi)Y+(\nabla_Y \varphi)X+(\nabla_X \varphi h)Y-(\nabla_Y \varphi h)X=$$
$$=-(\nabla_X \varphi)Y+(\nabla_Y \varphi)X+(\nabla_X \varphi)hY-(\nabla_Y \varphi)hX+\varphi (\nabla_Xh)Y-\varphi (\nabla_Yh)X.$$
Then using \eqref{30} and \eqref{31} we have
$$k(\eta(Y)X-\eta(X)Y)=\eta(X)(Y-hY)-\eta(Y)(X-hX)+\varphi ((\nabla_Xh)Y-(\nabla_Yh)X).$$
or
\begin{equation}\label{32}
(k+1)(\eta(Y)X-\eta(X)Y)=-\eta(X)hY+\eta(Y)hX+\varphi ((\nabla_Xh)Y-(\nabla_Yh)X).
\end{equation}

When we have $k>-1$, then the operator $h$ is diagonalizable (see \cite{ME}). Now let $X$ be a unit eigenvector of $h$ (i.e. $|X|=\epsilon_X=\pm 1$), say $hX=\lambda X$, $X\perp \xi$.
Since $trh^2=2(k+1)$, $\lambda=\pm\sqrt{k+1}$  and hence is a constant. Setting $Y=\varphi X$, \eqref{32} yields
$$\varphi ((\nabla_Xh)\varphi X-(\nabla_{\varphi X}h)X)=0$$
from which
\begin{equation}\label{33}
\varphi (-\lambda\nabla_X\varphi X-h\nabla_X\varphi X-\lambda\nabla_{\varphi X}X+h\nabla_{\varphi X}X)=0.
\end{equation}
Takeing the inner product of \eqref{33} with $X$ and recalling that $\varphi h+h\varphi=0$, we have
$$\lambda g(\nabla_{\varphi X}X,\varphi X)=0.$$
Since $\lambda\neq 0$ $(k>-1)$ and $X$ is unit, $\nabla_{\varphi X}X$ is orthogonal to both $X$ and $\varphi X$ and hence collinear with $\xi$. Now
$$\eta(\nabla_{\varphi X}X)=g(\nabla_{\varphi X}X,\xi)=-g(X,\nabla_{\varphi X}\xi)=-g(-X-hX,X)=\epsilon_X(\lambda+1).$$
Therefore
$$\nabla_{\varphi X}X=\epsilon_X(\lambda+1)\xi.$$
Similarly taking the inner product of \eqref{33} with $\varphi X$ yields
$$\nabla_{X}\varphi X=\epsilon_X(\lambda-1)\xi$$
and in turn $\nabla_{X}X=0$ and
$$[X,\varphi X]=-2\epsilon_X\xi.$$
Now from the form of the curvature tensor \eqref{19}, we have
$$R(X,\varphi X)\varphi X=-\epsilon_X(\frac{scal}{2}-trl)\varphi X$$
and by direct computation using $\nabla_X\xi=(\lambda-1)\varphi X$,
$$R(X,\varphi X)X=\nabla_X\nabla_{\varphi X}X-\nabla_{\varphi X}\nabla_XX-\nabla_{[X,\varphi X]}X=$$
$$=\epsilon_X(\lambda+1)\nabla_X\xi+2\epsilon_X\nabla_{\xi}X=\epsilon_X({\lambda}^2-1)\nabla_X\xi+2\epsilon_X\nabla_{\xi}X.$$
Thus
$$\nabla_{\xi}X=(\frac{{\lambda}^2-1}{2}-\frac{scal}{4})\varphi X$$
and hence
$$[\xi,X]=(\frac{(\lambda-1)^2}{2}-\frac{scal}{4})\varphi X.$$
Now computing $R(\xi,X)\xi$, by (\eqref{30}) and by direct computation, we have
$$-({\lambda}^2-1)X=\nabla_{\xi}(-\varphi X+\varphi hX)-\nabla_{(\frac{(\lambda-1)^2}{2}-\frac{scal}{4})\varphi X}\xi=$$
$$=(\lambda-1)\varphi\nabla_{\xi}X+(\frac{(\lambda-1)^2}{2}-\frac{scal}{4})(X+hX)=$$
$$=((\lambda-1)^2(\lambda+1)-\lambda\frac{scal}{2})X$$
from which
$$scal=2({\lambda}^2-1)=2k.$$
From \eqref{30} and \eqref{19} we see that
$$K(X,\xi)=k \quad and \quad K(X,\varphi X)=-k$$
as desired.

When we have $k<-1$, then the operator $\varphi h$ is diagonalizable (see \cite{ME}). Now let $X$ be a unit eigenvector of $\varphi h$ (i.e. $|X|=\epsilon_X=\pm 1$), say $\varphi hX=\lambda X$, $X\perp \xi$.
Since $trh^2=2(k+1)$, $\lambda=\pm\sqrt{-(k+1)}$  and hence is a constant. Setting $Y=\varphi X$, \eqref{32} yields
$$(\nabla_X\varphi h)\varphi X-(\nabla_{\varphi X}\varphi h)X=0$$
from which
\begin{equation}\label{35}
-\lambda\nabla_X\varphi X-\varphi h\nabla_X\varphi X-\lambda\nabla_{\varphi X}X+\varphi h\nabla_{\varphi X}X=0.
\end{equation}
Takeing the inner product of \eqref{35} with $\varphi X$ and recalling that $\varphi h+h\varphi=0$, we have
$$\lambda g(\nabla_{\varphi X}X,\varphi X)=0.$$
Since $\lambda\neq 0$ $(k<-1)$ and $X$ is unit, $\nabla_{\varphi X}X$ is orthogonal to both $X$ and $\varphi X$ and hence collinear with $\xi$. Now
$$\eta(\nabla_{\varphi X}X)=g(\nabla_{\varphi X}X,\xi)=-g(X,\nabla_{\varphi X}\xi)=-g(-\varphi^2X+\varphi h\varphi X,X)=\epsilon_X.$$
Therefore
$$\nabla_{\varphi X}X=\epsilon_X\xi.$$
Similarly taking the inner product of \eqref{35} with $X$ yields
$$\nabla_{X}\varphi X=-\epsilon_X\xi$$
and in turn $\nabla_{X}X=-\epsilon_X\lambda\xi$ and
$$[X,\varphi X]=-2\epsilon_X\xi.$$
Now from the form of the curvature tensor \eqref{19}, we have
$$R(X,\varphi X)\varphi X=-\epsilon_X(\frac{scal}{2}-trl)\varphi X$$
and by direct computation using $\nabla_X\xi=-\varphi X+\lambda X$,
$$R(X,\varphi X)X=\nabla_X\nabla_{\varphi X}X-\nabla_{\varphi X}\nabla_XX-\nabla_{[X,\varphi X]}X.$$
Thus
$$\nabla_{\xi}X=-(\frac{{\lambda}^2+1}{2}+\frac{scal}{4})\varphi X$$
and hence
$$[\xi,X]=-\lambda X-(\frac{{\lambda}^2-1}{2}+\frac{scal}{4})\varphi X.$$
Now computing $R(\xi,X)\xi$, by (\eqref{30}) and by direct computation, we have
$$({\lambda}^2+1)X=\nabla_{\xi}(-\varphi X+\lambda X)+\lambda \nabla_{X}\xi+(\frac{{\lambda}^2-1}{2}+\frac{scal}{4})\nabla_{\varphi X}\xi=$$
$$=-\varphi\nabla_{\xi}X+\lambda\nabla_{\xi}X+\lambda\nabla_X\xi+(\frac{{\lambda}^2-1}{2}+\frac{scal}{4})(-X-\lambda\varphi X)=$$
$$=(\lambda^2+1)X+(-\lambda-2\lambda(\frac{scal}{4}+\frac{\lambda^2}{2}))\varphi X$$
from which
$$scal=-2({\lambda}^2+1)=2k.$$
From \eqref{30} and \eqref{19} we see that
$$K(X,\xi)=k \quad and \quad K(X,\varphi X)=-k$$
as desired.
\end{proof}

\begin{defn}
A paracontact metric structure $(\varphi,\xi,\eta,g)$ is said to be $\emph{locally $\varphi-$}$ $\emph{symmetric}$ if $\varphi^2(\nabla_WR)(X,Y,Z)=0$, for all vector
fields $W,X,Y,Z$ orthogonal to $\xi$.
\end{defn}

In \cite{Z1} it is proved the following
\begin{thm}\label{th1}
Let $M^3$ be a paracontact metric manifold with $Q\varphi=\varphi Q$. Then $M^3$ is locally $\varphi-$symmetric if and only if the scalar curvature $scal$ of $M^3$ is constant.
\end{thm}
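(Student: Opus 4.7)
The plan is to exploit the fact that the hypothesis $Q\varphi=\varphi Q$ forces $M^3$ to be $\eta$-Einstein (\propref{p2}), so that the curvature tensor takes the explicit closed form \eqref{19}, and then to compute $\varphi^2(\nabla_WR)(X,Y)Z$ directly on horizontal arguments. The flat case (covered by \lemref{l1} and \remref{r1} when $l\equiv 0$) is trivial in both directions, since $R\equiv 0$ gives both local $\varphi$-symmetry and $scal\equiv 0$, so I may assume the formula \eqref{19} holds, with $\gamma=scal/2-trl$ and $b=(3trl-scal)/2$, and crucially with $trl$ constant by \lemref{l1}.

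The main step is a tensorial computation of $(\nabla_W R)(X,Y)Z$ at a point $p$, where I would take $W,X,Y,Z$ to be orthogonal to $\xi$ at $p$ and extend them by parallel transport along the integral curve of $W$, so that $\nabla_WX=\nabla_WY=\nabla_WZ=0$ at $p$. The derivative of the $\gamma$-part of \eqref{19} reduces to $(W\gamma)\bigl[g(Y,Z)X-g(X,Z)Y\bigr]$, since $\nabla g=0$ and the constant tensor $g(Y,Z)X-g(X,Z)Y$ has vanishing covariant derivative. The derivative of the $b$-part of \eqref{19}, evaluated at $p$, uses $\eta(X_p)=\eta(Y_p)=\eta(Z_p)=0$: most summands drop out, and a careful bookkeeping shows that the only surviving contribution is
\[
b\bigl[(\nabla_W\eta)(X)\,g(Y,Z)-(\nabla_W\eta)(Y)\,g(X,Z)\bigr]\xi,
\]
where $(\nabla_W\eta)(\cdot)=g(\cdot,\nabla_W\xi)$ via \eqref{3}. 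Crucially this residue is a multiple of $\xi$, and also the $(Wb)S(X,Y,Z)$ term vanishes at $p$ because $S(X,Y,Z)=0$ for horizontal inputs.

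Applying $\varphi^2=Id-\eta\otimes\xi$, the $\xi$-contribution is annihilated while horizontal vectors are fixed, so
\[
\varphi^2\bigl((\nabla_WR)(X,Y)Z\bigr)=(W\gamma)\bigl[g(Y,Z)X-g(X,Z)Y\bigr]
\]
for all $W,X,Y,Z$ orthogonal to $\xi$. Since $trl$ is constant, $W\gamma=\tfrac{1}{2}W(scal)$, so the locally $\varphi$-symmetric condition is equivalent to $W(scal)=0$ for every horizontal $W$: choosing $Y=Z=\varphi X$ with $X$ a non-null horizontal unit vector and using $g(\varphi X,\varphi X)=-g(X,X)\neq 0$, the identity $(W\gamma)g(\varphi X,\varphi X)X=0$ forces $W(scal)=0$. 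Combining this with $\xi(scal)=0$, which was already established inside the proof of \lemref{l1}, we see that $scal$ is constant on $M^3$ if and only if $M^3$ is locally $\varphi$-symmetric.

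The main obstacle is the bookkeeping in the derivative of the $b$-part of \eqref{19}: one has to verify that every horizontal contribution to $(\nabla_WR)(X,Y)Z$ is already captured by the $(W\gamma)$-term, so that the projection $\varphi^2$ kills all of the $\eta$- and $\nabla\xi$-generated correction terms. Once this is in place the equivalence with constancy of $scal$ follows immediately.
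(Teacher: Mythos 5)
Your proof is correct, and it is self-contained. Note, however, that the paper itself offers no proof of this theorem to compare against: it is simply quoted from \cite{Z1}. So the only meaningful check is internal consistency with the paper's own machinery, and your argument passes it. The key identity you derive,
$$\varphi^2\bigl((\nabla_WR)(X,Y)Z\bigr)=(W\gamma)\bigl[g(Y,Z)X-g(X,Z)Y\bigr],\qquad \gamma=\tfrac{scal}{2}-trl,$$
for $W,X,Y,Z$ orthogonal to $\xi$, follows exactly as you say from \eqref{19}: every term of the $b$-part of $\nabla_WR$ either carries a surviving factor $\eta(\text{horizontal})=0$ or is proportional to $\xi$ and hence killed by $\varphi^2=Id-\eta\otimes\xi$, while the $\gamma$-part contributes only $(W\gamma)$ times a $\nabla$-parallel tensor. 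Two points deserve emphasis because they are where a careless version of this argument would leak. First, \eqref{19} must be available everywhere; this is guaranteed by \propref{p2} (the $\eta$-Einstein form \eqref{15} holds globally under $Q\varphi=\varphi Q$, with $a,b$ pinned down by $Q\xi=(trl)\xi$ and the trace), so your separate appeal to the flat case is not even strictly necessary. Second, the definition of local $\varphi$-symmetry only constrains horizontal derivatives, so the converse direction genuinely needs $\xi(scal)=0$; you correctly import this from the proof of \lemref{l1}, together with the constancy of $trl$ which turns $W\gamma$ into $\tfrac12 W(scal)$. Your choice $Y=Z=\varphi X$ with $X$ non-null horizontal, using $g(\varphi X,\varphi X)=-g(X,X)\neq0$ and $g(X,\varphi X)=0$, cleanly extracts $W(scal)=0$ from local $\varphi$-symmetry. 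I see no gap.
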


\begin{rem}
Using \eqref{16} with $trl=const.$, we obtain the following formula
\begin{equation}\label{26}
2|\nabla Q|^2=|gradscal|^2-(3trl-scal)^2(4+trl)
\end{equation}
which is valid on any paracontact metric manifold $M^3$ with $Q\varphi=\varphi Q$.
\end{rem}
From  $Theorem~\ref{th1}$, we get that a locally $\varphi-$symmetric paracontact metric manifold $M^3$ is a manifold with either $scal=3trl$, $scal=-12$ or $trl=-4$.
\section*{Acknowledgments}

S.Z. is partially supported by Contract  DN 12/3/12.12.2017 and Contract 80-10-31/10.04.2019 with the Sofia University "St.Kl.Ohridski".

A.B. is partially supported by Contract 80-10-209/17.04.2019 with the Sofia University "St.Kl.Ohridski".

\end{document}